\theoremstyle{definition}
\newtheorem{thm}{Theorem}[section]
\crefname{thm}{Theorem}{Theorems}
\newtheorem{cor}[thm]{Corollary}
\crefname{prop}{Proposition}{Propositions}
\crefname{lem}{Lemma}{Lemmas}
\newtheorem{defn}[thm]{Definition}
\crefname{defn}{Definition}{Definitions}
\newtheorem{rmk}[thm]{Remark}
\newtheorem*{ack*}{Acknowledgements}
\newcommand*{\mb}[1]{\mathbb{#1}}
\newcommand{\pt}{{\rm pt}}
\begin{document}

\author{Hunter Spink}
\title{Modified diagonals and linear relations between small diagonals}
\begin{abstract}
We prove that the vanishings of the modified diagonal cycles of Gross and Schoen govern the $\mathbb{Z}$-linear relations between small $m$-diagonals $\pt^{\{1,\ldots,n\}\setminus A}\times\Delta_A$ in the rational Chow ring of $X^n$ for $A$ ranging over $m$-element subsets of $\{1,\ldots,n\}$. Our results generalize to arbitrary symmetric classes in place of the diagonal in $X^m$, and with different types of inclusions $A^\bullet(X^m)_{\mathbb{Q}}^{S_m} \hookrightarrow A^\bullet(X^n)_{\mathbb{Q}}$.

The combinatorial heart of this paper, which may be of independent interest, is showing the $\mathbb{Z}$-linear relations between elementary symmetric polynomials $e_k(x_{a_1},\ldots,x_{a_m}) \in \mathbb{Z}[x_1,\ldots,x_n]$ are generated by the $S_n$-translates of a certain alternating sum over the facets of a hyperoctahedron.
\end{abstract}
\maketitle

\section{Introduction}
Let $X$ be a smooth projective variety, and denote the rational Chow ring of a variety $Y$ by $$A^\bullet(Y)_{\mathbb{Q}}:=A^\bullet(Y)\otimes \mathbb{Q}.$$ Fix an integer $m\ge 0$ and a symmetric class $$\alpha \in A^\bullet(X^m)^{S_m}_{\mathbb{Q}}\subset A^\bullet(X^m)_{\mathbb{Q}}.$$
A typical example of such an $\alpha$ would be the class of the diagonal $\Delta_m \subset X^m$. We denote by $\Delta_A$ the diagonal in $X^A$ for any set $A$.

Given a subset $A\subset\{1,\ldots,n\}$ of size $m$, we have the following inclusions $A^\bullet(X^m)_{\mathbb{Q}}^{S_m} =A^\bullet(X^A)_{\mathbb{Q}}^{S_m}\hookrightarrow A^\bullet(X^n)_{\mathbb{Q}}$:
\begin{itemize}
\item pullback along a projection $X^n \to X^A$, and
\item pushforward along an inclusion $X^A \cong \{\pt\}^{\{1,\ldots,n\}\setminus A}\times X^A\hookrightarrow X^n$.
\end{itemize}
These are representative of the types of inclusions we will consider, and after choosing such a system of inclusions we denote by $$\alpha(A)\in A^\bullet(X^n)_{\mathbb{Q}}$$ the image of $\alpha$ under such a map associated to an $m$-element set $A\subset \{1,\ldots,n\}$.

In this paper we classify the $\mathbb{Z}$-linear relations that can occur between such $\alpha(A)$, i.e. we classify kernels of maps $\mathbb{Z}^{\binom{n}{m}} \to A^\bullet(X^n)_{\mathbb{Q}}$ given by $A \mapsto \alpha(A)$. Surprisingly, there are very few possibilities, each of which is a group generated by the $S_n$-translates of a single relation which we call for reasons that will soon become apparent a ``hyperoctahedral relation''.

A very special case is the main application of this paper. In \cite{GrossSchoen}, Gross and Schoen defined a certain ``modified diagonal cycle'' on $X^k$, given by $$\Delta'_k:=\sum_{\emptyset \ne B \subset \{1,\ldots,k\}}(-1)^{k-|B|}\Delta_B(B)\in A^\bullet(X^k)_{\mathbb{Q}}$$
where $\Delta_B(B)$ is defined to be the pushforward of $\Delta_B$ under $X^B\cong \{\pt\}^{\{1,\ldots,k\}\setminus B}\times X^B \hookrightarrow X^k$. In general, the class depends on the choice of $\pt \in X$, and the vanishing has been intensely studied in the context of diagonal decompositions in Chow groups. In particular, we have the following incomplete list of results.

\begin{itemize}
\item In \cite{GrossSchoen}, Gross and Schoen showed that
\begin{itemize}
\item $\Delta'_k=0$ precisely if $k \ge 2$ for $X=\mathbb{P}^1$, 
\item $\Delta'_k=0$ precisely if $k \ge 3$ for $X$ of genus $1$, and
\item $\Delta'_3=0$ for $X$ a hyperelliptic curve with $\pt \in X$ a Weierstrass point.
\end{itemize}
\item In \cite{K3Surface}, Beauville and Voisin showed that
\begin{itemize}
\item $\Delta'_3=0$ on a $K3$-surface $X$ if $\pt \in X$ lies on a rational curve.
\end{itemize}
\item In \cite{Ogrady}, O'Grady showed that
\begin{itemize}
\item If $\Delta'_m=0$ then $\Delta'_{m+s} =0$ for all $s \ge 0$ for $X$ any smooth projective variety.
\end{itemize}
\item In \cite{Voisin}, Voisin showed that
\begin{itemize}
\item if $X$ is a smooth projective connected variety of dimension $n$ swept out by irreducible curves of genus g supporting a zero-cycle rationally equivalent to $\pt \in X$, then we have $\Delta'_m=0$ for $m \ge (n+1)(g+1)$.
\end{itemize}
\item In \cite{MoonenYin}, Moonen and Yin showed that
\begin{itemize}
\item $\Delta'_n=0$ on a $g$-dimensional abelian variety precisely when $n \ge 2g+1$, and
\item $\Delta'_n=0$ on a curve of genus $g$ whenever $n \ge g+2$ (which is sharp for a generic pointed curve, see \cite{YinThesis}).
\end{itemize}

\end{itemize}

We will show that the minimal $k\le m$ such that $\Delta'_k=0$ determines the $\mb{Z}$-linear relations between the $\binom{n}{m}$ classes $\Delta_A(A)\in A^\bullet(X^n)_{\mathbb{Q}}$ for $A \subset \{1,\ldots,n\}$ ranging over $m$-element subsets, which will be generated by the $S_n$-translates of a single ``$k$-hyperoctahedral $m$-relation''. As it is known (and easy to show) that only for $X=\pt$ or $\mathbb{P}^1$ do we have $\Delta'_2=0$, we may deduce as a formal consequence the set of such $\mathbb{Z}$-linear relations for all pairs $m \le n$ for
\begin{itemize}
\item $X$ of genus $\le 1$,
\item $X$ a $g$-dimensional abelian variety,
\item $X$ a generic pointed curve of genus $g$,
\item $X$ a $K_3$ surface and $\pt \in X$ lying on a rational curve, and
\item $X$ hyperelliptic with $\pt \in X$ a Weierstrass point.
\end{itemize}
Moreover, if $\Delta'_k=0$ then the $k$-hyperoctahedral $m$-relations are a subset of the relations satisfied by the $\Delta_A(A)$ classes (which may turn out to be all such relations if $k$ happens to be minimal).

More generally, for an arbitrary symmetric class $\alpha \in A^\bullet(X^m)_{\mathbb{Q}}^{S_m}$, if we define $\alpha_B$ to be the pushforward of $\alpha$ along the projection $X^m \to X^B$ and $\alpha_B(B)$ analogously to $\Delta_B(B)$, then the vanishings of $$\alpha'_k:=\sum_{B \subset \{1,\ldots,k\}}(-1)^{k-|B|}\alpha_B(B) \in A^\bullet(X^k)_{\mathbb{Q}}^{S_k}$$
similarly control the $\mathbb{Z}$-linear relations between $\alpha(A)$ classes in $A^\bullet(X^n)_{\mathbb{Q}}$, which again arise as ``hyperoctahedral $m$-relations'' (note that $\Delta_{\emptyset}=0$ but $\alpha_{\emptyset}$ need not be zero).

As it turns out, the ``hyperoctahedral relations'' govern the $\mathbb{Z}$-linear relations between elementary symmetric polynomials $e_k(x_{a_1},\ldots,x_{a_m})\in \mathbb{Z}[x_1,\ldots,x_n]$ where $\{a_1,\ldots,a_m\}\subset \{1,\ldots,n\}$ ranges over $m$-element subsets, and the study of these relations between elementary symmetric polynomials forms the combinatorial heart of this paper.

The geometric heart of this paper is a Chow motive computation, which allows us to extract useful information about classes in $A^\bullet(X^m)_{\mathbb{Q}}$ despite having essentially no information about the ring itself. We will decompose the diagonal class in $A^\bullet((X^m)^2)_{\mathbb{Q}}$ in such a way that convolving with the pieces yields a system of orthogonal idempotent endomorphisms of $A^\bullet(X^m)_{\mathbb{Q}}$, which consequently decomposes $A^\bullet(X^m)_{\mathbb{Q}}$ into the direct sum of the images of the idempotents. Our key insight is that we can produce such a decomposition where the non-zero components of $\alpha$ in these summands govern the $\mathbb{Z}$-linear relations between the $\alpha(A)$ classes.

The structure of this paper is as follows.
\begin{itemize}
\item In Section 2 we describe the ``hyperoctahedral relations'' and state our main results.
\item In Section 3 we prove our main combinatorial result \Cref{Zkernel} classifying the $\mathbb{Z}$-linear relations between polynomials $e_k(x_{a_1},\ldots,x_{a_m}) \in \mathbb{Z}[x_1,\ldots,x_n]$ where $\{a_1,\ldots,a_m\} \subset \{1,\ldots,n\}$ ranges over $m$ element subsets.
\item In Section 4 we prove our main geometric result \Cref{mainthm} which specializes to the modified diagonal result mentioned previously.
\end{itemize}

\section{Hyperoctahedral relations and statement of results}
To state our results, it will be useful to notate the $\binom{n}{m}$ natural inclusions $$\mathbb{Z}[x_1,\ldots,x_m]^{S_m} \hookrightarrow \mathbb{Z}[x_1,\ldots,x_n]$$ indexed by subsets $A \subset \{1,\ldots,n\}$ of size $m$.
\begin{defn}
Denote by $e_k(x_1,\ldots,x_m)\in \mathbb{Z}[x_1,\ldots,x_m]$ the $k$'th elementary symmetric polynomial.
For a symmetric polynomial $f \in \mathbb{Z}[x_1,\ldots,x_m]^{S_m}$ and $A=\{a_1,\ldots,a_m\} \subset \{1,\ldots,n\}$ an $m$-element set, we denote $$f(A):=f(x_{a_1},\ldots,x_{a_m})\in \mathbb{Z}[x_1,\ldots,x_n].$$
\end{defn}
The ``$k$-hyperoctahedral $m$-relations'' which we will shortly define are $\mb{Z}$-linear relations which generate all $\mathbb{Z}$-linear relations between the polynomials $e_{k-1}(A)$ in $\mb{Z}[x_1,\ldots,x_n]$ for fixed $k$ and $A$ ranging over $m$-element subsets of $\{1,\ldots,n\}$. They are the $S_n$-translates of a single relation in $\mb{Z}[x_1,\ldots,x_{m+k}]$ between the $e_{k-1}(A)$'s with $A \subset \{1,\ldots,m+k\}$ ranging over subsets of size $m$.

As an example, suppose we let $m=3$ and $k=3$. Then we can view $$\sum_{|A|=3} \lambda_{A} e_2(A)=0$$ as describing a formal $\mb{Z}$-linear combination of triangles $\{i,j,k\}\subset \{1,\ldots,n\}$ such that after replacing each $\{i,j,k\}$ with the sum of its three edges $\{i,j\}+\{j,k\}+\{i,k\}$, the sum becomes zero.

If we have a triangulation of a surface such that the face map has chromatic number two, then we can alternately sum the triangles on the surface to get such a relation. The smallest non-trivial instance of this occurs for an octahedron.
The sum of $\{i,j,k\}$ over all dark triangles minus the sum over light triangles in the octahedron
\begin{center}
\begin{tikzpicture}
\def\xa{0}\def\ya{1}
\def\xb{-1}\def\yb{0}
\def\xc{0.2}\def\yc{-0.5}
\def\xd{1}\def\yd{0}
\def\xe{-0.2}\def\ye{0.5}
\def\xf{0}\def\yf{-1}
\draw (\xe+0.1,\ye-0.1) node[anchor=south east] {5};
\draw [draw=none, fill=gray, fill opacity=0.85] (\xa,\ya)--(\xb,\yb)--(\xc,\yc)--cycle;
\draw [draw=none, fill=gray, fill opacity=0.85] (\xa,\ya)--(\xd,\yd)--(\xe,\ye)--cycle;
\draw [draw=none, fill=gray, fill opacity=0.85] (\xf,\yf)--(\xd,\yd)--(\xc,\yc)--cycle;
\draw [draw=none, fill=gray, fill opacity=0.85] (\xf,\yf)--(\xb,\yb)--(\xe,\ye)--cycle;
\draw [draw=none, fill=white, fill opacity=0.85] (\xa,\ya)--(\xc,\yc)--(\xd,\yd)--cycle;
\draw [draw=none, fill=white, fill opacity=0.85] (\xf,\yf)--(\xb,\yb)--(\xc,\yc)--cycle;
\draw (\xa,\ya) node[anchor=south] {1};
\draw (\xb,\yb) node[anchor=south] {2};
\draw (\xc,\yc+0.1) node[anchor=south west] {3};
\draw (\xd,\yd) node[anchor=south] {4};
\draw (\xf-0.05,\yf) node[anchor=south] {6};
\end{tikzpicture}
\end{center}
is what we will call a $3$-hyperoctahedral $3$-sum, yielding the relation
\begin{align*}
&e_2(\{1,2,3\})+e_2(\{1,4,5\})+e_2(\{6,3,4\})+e_2(\{6,2,5\})\\
=&e_2(\{1,3,4\})+e_2(\{1,2,5\})+e_2(\{6,2,3\})+e_2(\{6,4,5\}).
\end{align*}
We will in fact show that such relations generate all $\mb{Z}$-linear relations between the $e_2(A)$ with $|A|=3$. As we will see later, the $\mathbb{Z}$-linear relations between the polynomials $e_{m-1}(A)$ for $|A|=m$ correspond to an alternating sum over the facets of an $m$-dimensional hyperoctahedron, which we will call an $m$-hyperoctahedral $m$-sum. 
\begin{defn}
Given a set $B\subset \{1,\ldots,n\}$ of size $m-k$ and disjoint sets $C_i=\{c_{i,0},c_{i,1}\}\subset \{1,\ldots,n\}\setminus B$ for $i=1,\ldots,k$, we say that the element
$$\sum_{(\epsilon_1,\ldots,\epsilon_k)\in \{0,1\}^k} (-1)^{\sum_{i=1}^k \epsilon_i}B\sqcup \{c_{1,\epsilon_1},\ldots,c_{k,\epsilon_k}\}$$
is a \emph{$k$-hyperoctahedral $m$-sum} in $\mb{Z}^{\binom{n}{m}}$.

Define $G_k \subset \mathbb{Z}^{\binom{n}{m}}$ to be the subgroup generated by $k$-hyperoctahedral $m$-sums (we supress the dependence on $m$ and $n$ in the notation).

\end{defn}
The parameter $k$ may be thought of as the ``dimension'' of the hyperoctahedron, and when $k<m$ this expression can be thought of as an (iterated) cone with apex(es) $B$ over a $k$-hyperoctahedral $k$-sum.

The reason for introducing this extra parameter $k$ lies in the following theorem, which is our central combinatorial result.

\begin{thm}
\label{Zkernel}
Let $k\le m \le n$ be integers. Then the kernel of the map $\mb{Z}^{\binom{n}{m}} \to \mb{Z}[x_1,\ldots,x_n]$ given by $A \mapsto e_k(A)$ is $G_{k+1}$, i.e. the $\mathbb{Z}$-linear relations between the $e_k(A)$'s are generated by the $(k+1)$-hyperoctahedral $m$-sums.

Furthermore, we have the sequence of inclusions
$$0 =G_{m+1} \subset G_{m} \subset \ldots \subset G_0= \mathbb{Z}^{\binom{n}{m}}.$$

\end{thm}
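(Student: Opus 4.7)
The plan is to handle the easy pieces first, then prove $\ker(A \mapsto e_k(A)) \subset G_{k+1}$ by induction on $k$. The chain of inclusions is immediate: $G_0 = \mathbb{Z}^{\binom{n}{m}}$ since a $0$-hyperoctahedral sum is just a single $m$-subset $B$; $G_{m+1} = 0$ since $|B|=-1$ is impossible; and $G_{k+1} \subset G_k$ by splitting the defining alternating sum on the last index $\epsilon_{k+1}$ to write it as a difference of two $k$-hyperoctahedral sums with enlarged bases $B \cup \{c_{k+1,0}\}$ and $B \cup \{c_{k+1,1}\}$. For $G_{k+1} \subset \ker(A \mapsto e_k(A))$, using the generating function $p_A(t) := \prod_{i \in A}(1+tx_i)$, a direct calculation shows the alternating sum of a $(k+1)$-hyperoctahedral $m$-sum equals $t^{k+1}\prod_{i \in B}(1+tx_i)\prod_{j=1}^{k+1}(x_{c_{j,0}} - x_{c_{j,1}})$, so the $t^k$-coefficient $e_k$ vanishes.

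For the reverse containment I would induct on $k$, aided by a preliminary nesting $\ker(\sigma_k) \subset \ker(\sigma_j)$ for $j \le k$, where $\sigma_\ell$ sends $A$ to $\sum_{T \subset A,\,|T|=\ell} T$ (equivalent to $e_\ell$ under $T \leftrightarrow \prod_{i \in T} x_i$). The nesting follows from the double-count $\binom{m-j}{k-j}\sigma_j(v)_{T'} = \sum_{|T|=k,\,T\supset T'}\sigma_k(v)_T$ on each $j$-subset $T'$, combined with torsion-freeness of $\mathbb{Z}^{\binom{n}{j}}$. The base case $k=0$ identifies $\ker(\sigma_0)$ with the augmentation kernel, which is generated by the $1$-hyperoctahedral differences $(B\cup\{c\}) - (B\cup\{c'\})$, since any two $m$-subsets are connected by single-element swaps. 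For the step, assume $\ker(\sigma_{k-1}) = G_k$; given $v \in \ker(\sigma_k) \subset \ker(\sigma_{k-1}) = G_k$, I write $v = \sum_\alpha \mu_\alpha g_{B_\alpha, C_\cdot^{(\alpha)}}$ as a combination of $k$-hyperoctahedral $m$-sums.

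The crucial reduction is that modulo $G_{k+1}$, the class $[g_{B, C_\cdot}]$ depends only on $C_\cdot$ and not on $B$: if $B' = (B\setminus\{d\})\cup\{d'\}$ is obtained by a single-element swap, the difference $g_{B, C_\cdot} - g_{B', C_\cdot}$ is itself a $(k+1)$-hyperoctahedral sum with base $B\setminus\{d\}$ and extra pair $\{d,d'\}$, and any two bases of size $m-k$ disjoint from $\bigcup_j C_j$ are linked by a chain of such swaps. Under the polynomial identification $g_{B, C_\cdot} = x_B\prod_j(x_{c_{j,0}} - x_{c_{j,1}}) = x_B \cdot h_{C_\cdot}$ in the squarefree degree-$m$ part of $\mathbb{Z}[x_1,\ldots,x_n]$, the condition $\sigma_k(v)=0$ becomes $\sum_\alpha \mu_\alpha h_{C_\cdot^{(\alpha)}} = 0$, and the task reduces to verifying that $v = \sum_\alpha \mu_\alpha x_{B_\alpha} h_{C_\cdot^{(\alpha)}} \in G_{k+1}$ under this vanishing hypothesis.

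I expect this final step to be the main obstacle. The clean case is when $n$ admits a common $B^*$ of size $m-k$ disjoint from every $C_\cdot^{(\alpha)}$, in which case the base-swap relations collapse $v \equiv x_{B^*} \sum_\alpha \mu_\alpha h_{C_\cdot^{(\alpha)}} = 0 \pmod{G_{k+1}}$, closing the induction. For tighter ranges of $n$ a finer argument is needed---for example an auxiliary induction on the number of distinct configurations $C_\cdot^{(\alpha)}$ appearing, or an embedding of the problem into a larger ground set $\{1,\ldots,n'\}$ together with a careful restriction back to $\{1,\ldots,n\}$ (using the fact that the polynomial $\sum_\alpha \mu_\alpha x_{B_\alpha} h_{C_\cdot^{(\alpha)}}$ is itself a well-defined element of the larger ring).
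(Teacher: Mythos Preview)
Your preliminary work is correct and your overall architecture differs substantively from the paper's. The chain of inclusions, the containment $G_{k+1}\subset\ker$, the kernel nesting $\ker(\sigma_k)\subset\ker(\sigma_{k-1})$ via the double-count, and the base-swap observation that $[g_{B,C_\cdot}]\in\mathbb{Z}^{\binom{n}{m}}/G_{k+1}$ depends only on $C_\cdot$ are all fine. The computation $\sigma_k(g_{B,C_\cdot})=h_{C_\cdot}$ is also correct. But the last step, which you flag yourself, is a genuine gap, and the proposed workarounds do not close it. The common-$B^*$ argument needs a set of size $m-k$ avoided by \emph{every} $C_\cdot^{(\alpha)}$ simultaneously, which is not available in general. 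The embedding into a larger ground set $\{1,\ldots,n'\}$ does give $v\in G_{k+1}(n')$, but the restriction statement $G_{k+1}(n')\cap\mathbb{Z}^{\binom{n}{m}}=G_{k+1}(n)$ is equivalent to the theorem itself and so is circular. And an ``auxiliary induction on configurations'' would have to classify all $\mathbb{Z}$-linear relations among the products $h_{C_\cdot}=\prod_j(x_{c_{j,0}}-x_{c_{j,1}})$---a nontrivial problem in its own right (for instance one must handle $(x_1-x_2)(x_3-x_4)-(x_1-x_3)(x_2-x_4)+(x_1-x_4)(x_2-x_3)=0$ and all higher analogues).

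The paper sidesteps this obstacle with a different induction. It first shows the $e_k(A)$ for $A\subset\{1,\ldots,m+k\}$ are $\mathbb{Q}$-linearly independent by a dimension count: there are $\binom{m+k}{m}=\binom{m+k}{k}$ of them, matching the number of squarefree degree-$k$ monomials, and an explicit recursion exhibits each such monomial in their span. Then for $n>m+k$ it eliminates $x_n$: the coefficient of $x_n$ in $\sum\lambda_A e_k(A)=0$ gives a relation $\sum_{n\in A}\lambda_A e_{k-1}(A\setminus n)=0$ among $e_{k-1}$'s in $m-1$ variables, which by induction on $k$ lies in the group generated by $k$-hyperoctahedral $(m-1)$-sums. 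Each such sum uses only $m+k-1\le n-1$ indices from $\{1,\ldots,n-1\}$, so there is always a single spare index $r_i$ available to adjoin the pair $\{n,r_i\}$ and promote it to a $(k+1)$-hyperoctahedral $m$-sum; subtracting these kills all terms with $n\in A$ modulo $G_{k+1}$, and one iterates down to $n=m+k$. The essential contrast with your approach is that the paper only ever needs \emph{one} spare index per hyperoctahedral summand, whereas your base-swap reduction demands an $(m-k)$-element set simultaneously avoided by all of them.
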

\begin{cor}
\label{symcor}
If $f\in \mathbb{Z}[x_1,\ldots,x_m]^{S_m}$ is a symmetric polynomial, then the kernel of the map $\mb{Z}^{\binom{n}{m}} \to \mb{Z}[x_1,\ldots,x_n]$ given by $A \mapsto f(A)$ is $G_{k+1}$, where $k$ is the largest number of distinct $x_i$ to appear in a non-zero monomial.
\end{cor}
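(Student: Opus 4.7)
The plan is to expand $f$ in the monomial symmetric function basis and reduce each piece to \Cref{Zkernel}. Writing $f = \sum_\lambda c_\lambda m_\lambda$ and noting that every monomial of $m_\lambda$ has exactly $\ell(\lambda)$ distinct variables (and distinct $m_\lambda$'s share no monomials), one sees that $k = \max\{\ell(\lambda) : c_\lambda \neq 0\}$. Both inclusions $G_{k+1} \subseteq \ker(A \mapsto f(A))$ and $\ker(A \mapsto f(A)) \subseteq G_{k+1}$ will then follow by comparing $f$ to an appropriate $e_r$.

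For the containment $G_{k+1} \subseteq \ker(A \mapsto f(A))$, the key identity is
$$m_\lambda(A) = \sum_{\substack{S \subseteq A \\ |S| = \ell(\lambda)}} m_\lambda(S),$$
obtained by stratifying monomials of $m_\lambda(A)$ by their support. This shows that $A \mapsto m_\lambda(A)$ factors through the map $A \mapsto \sum_{S \subseteq A,\, |S|=\ell(\lambda)} [S] \in \mathbb{Z}^{\binom{n}{\ell(\lambda)}}$, which under the identification of basis vectors with squarefree monomials is exactly $A \mapsto e_{\ell(\lambda)}(A)$. By \Cref{Zkernel} together with the chain $G_{\ell(\lambda)+1} \supseteq G_{k+1}$ (valid since $\ell(\lambda) \le k$ whenever $c_\lambda \neq 0$), the containment follows for each summand, hence for $f$.

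For the reverse containment, I would split $f = f_k + f_{<k}$ where $f_k = \sum_{\ell(\lambda) = k} c_\lambda m_\lambda$. Since monomials of $f_k(A)$ and $f_{<k}(A)$ are distinguished by their number of distinct variables, any relation $\sum_A \mu_A f(A) = 0$ forces $\sum_A \mu_A f_k(A) = 0$ separately. Pick any $\lambda$ with $\ell(\lambda) = k$ and $c_\lambda \neq 0$, and for each $S = \{s_1,\ldots,s_k\}$ extract the coefficient of the monomial $x_{s_1}^{\lambda_1}\cdots x_{s_k}^{\lambda_k}$: this monomial appears in $m_\nu(A)$ with coefficient $1$ exactly when $\nu = \lambda$ and $S \subseteq A$, and zero otherwise (the exponent multiset uniquely determines $\nu$, and the support uniquely determines $S$). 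Hence the coefficient in $\sum_A \mu_A f_k(A)$ equals $c_\lambda \sum_{A \supseteq S} \mu_A$, and setting these to zero for all $|S|=k$ gives $\sum_{A \supseteq S} \mu_A = 0$, which is exactly the statement that $\sum_A \mu_A e_k(A) = 0$. \Cref{Zkernel} then places $(\mu_A)$ in $G_{k+1}$.

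The main obstacle, modest as it is, lies in the second step: recognizing that one can disentangle contributions from different $m_\lambda$ sharing the maximal value of $\ell(\lambda)$ by inspecting a single leading monomial per partition, so that just one $\lambda$ with $c_\lambda \neq 0$ suffices to recover the critical relation on the $e_k(A)$'s. Everything else is routine bookkeeping in the monomial basis.
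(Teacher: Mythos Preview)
Your proof is correct and takes essentially the same approach as the paper: both expand $f$ in the monomial symmetric function basis (the paper writes $g_B$ for your $m_\lambda$), observe that the kernel for each $m_\lambda$ coincides with that of $e_{\ell(\lambda)}$, and then use the nesting $G_0\supset G_1\supset\cdots$ to conclude. The paper phrases the reverse containment as ``the kernel for $f$ is the intersection of the kernels for the $g_B$'s with nonzero coefficient,'' while you extract a single coefficient to pin down the $e_k$-relation directly; these are the same idea, and your version simply makes explicit the steps the paper leaves to the reader.
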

\begin{proof}[Proof of \Cref{symcor}]
For $B=(b_1,\ldots,b_j)$ with $b_1 \le b_2 \le \ldots \le b_j$, if we let $g_B$ be the sum of all distinct monomials $x_{i_1}^{b_1}\cdots x_{i_j}^{b_j}$, then we can write $f=\sum_B \lambda_B g_B$ for some coefficients $\lambda_B$, and the $\mathbb{Z}$-linear relations between $f(A)$ are the intersections of the relation groups for each $g_B$ such that $\lambda_B \ne 0$. But it is easy to see that the relations between $g_B(A)$ are identical to the relations between $e_{|B|}(A)$, which by \Cref{Zkernel} is given by $G_{|B|+1}$. As the $G$'s are nested, the result follows.
\end{proof}

We are now in a position to state our geometric results properly. In a product space, we use $\boxtimes$ to denote intersection product of pullbacks from disjoint factors.
\begin{thm}
\label{mainthm}
Let $X$ be a smooth projective variety and let $\gamma \in A^\bullet(X)_{\mathbb{Q}}$ be a class for which there exists a $\gamma'$ with $\int_{X \to \pt} \gamma \cup \gamma^*=1$. Then given $\alpha \in A^\bullet(X^m)_{\mathbb{Q}}^{S_m}$, the kernel of the map $\mb{Z}^{\binom{n}{m}}\to A^\bullet(X^n)_{\mathbb{Q}}$ given by $$A \mapsto \alpha(A):=\gamma^{\boxtimes \{1,\ldots,n\}\setminus A}\boxtimes \alpha$$ is of the form $G_{k+1}$ for some $-1 \le k \le m$. If $\alpha=0$ then $k=-1$, otherwise $k$ is the largest number $\le m$ such that $$0 \ne \alpha'_k:= \sum_{B \subset \{1,\ldots,k\}} (-1)^{k-|B|}\alpha_B(B)\in A^\bullet(X^k)_{\mathbb{Q}}$$
where we define $\alpha_B\in A^\bullet(X^B)_{\mathbb{Q}}^{S_{B}}$ to be the projection of $(\gamma^*)^{\boxtimes\{1,\ldots,m\}\setminus B}\cup\alpha$ to $X^B$ and $\alpha_B(B)=\gamma^{\boxtimes \{1,\ldots,n\}\setminus B}\boxtimes \alpha_B$.
\end{thm}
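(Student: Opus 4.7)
The plan is to convert this into the combinatorial result of Theorem~\ref{Zkernel} via a Chow-motivic decomposition of $A^\bullet(X^m)_{\mathbb{Q}}^{S_m}$ built from the pair $(\gamma,\gamma^*)$. First I form the correspondence $p:=\gamma\boxtimes\gamma^*\in A^\bullet(X\times X)_{\mathbb{Q}}$, which is idempotent under convolution (using $\int_X\gamma\cup\gamma^*=1$), and its orthogonal complement $q:=[\Delta_X]-p$. On $X^m$ the diagonal factors as $[\Delta_{X^m}]=\sum_{S\subset\{1,\ldots,m\}}q^S\boxtimes p^{\bar S}$, producing $2^m$ orthogonal idempotents. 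Grouping by $|S|=k$ yields idempotents $\sum_{|S|=k}q^S\boxtimes p^{\bar S}$ which are $S_m$-equivariant, so on $S_m$-invariant classes they give a canonical decomposition $\alpha=\sum_{k=0}^m\alpha^{(k)}$.

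The second step is to identify the vanishing of $\alpha^{(k)}$ with that of $\alpha'_k$. Expanding $q=[\Delta_X]-p$ inside the $k$-th idempotent produces an inclusion-exclusion sum whose surviving terms are pushforwards of classes of the form $\alpha_T$ into $X^m$; using the $S_m$-symmetry of $\alpha$ to collect terms, one recognises the formula defining $\alpha'_k$, now placed inside $X^m$ with $\gamma$'s in the complementary coordinates. Because $\int_X\gamma\cup\gamma^*=1$, pairing back with $(\gamma^*)^{\boxtimes(m-k)}$ in $m-k$ chosen coordinates is a left inverse on the $k$-th summand, so $\alpha^{(k)}=0\iff\alpha'_k=0$, and moreover the $X^k$-restriction of $\alpha^{(k)}$ recovers $\alpha'_k$ up to a combinatorial constant.

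The third step is the combinatorial reduction. By $S_m$-symmetry of $\alpha^{(k)}$ and since every $p$ deposits a $\gamma$ in its slot, one can rewrite $\gamma^{\boxtimes\{1,\ldots,n\}\setminus A}\boxtimes\alpha^{(k)}=\sum_{B\subset A,\;|B|=k}\iota_B(\beta_k)$, where $\beta_k\in A^\bullet(X^k)_{\mathbb{Q}}$ is proportional to $\alpha'_k$ and $\iota_B$ denotes inclusion into the positions $B$ with $\gamma$'s in the remaining $n-k$ coordinates. The key orthogonality observation is that the $\iota_B(\beta_k)$ for varying $(k,B)$ lie in pairwise orthogonal summands of $A^\bullet(X^n)_{\mathbb{Q}}$ coming from the analogous motivic decomposition on $X^n$ (indexed by the set of ``$q$-positions''). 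Hence a relation $\sum_A c_A\alpha(A)=0$ decouples over $k$ and, for each $k$ with $\alpha'_k\ne 0$, reduces to $\sum_{A\supset B}c_A=0$ for every $k$-subset $B\subset\{1,\ldots,n\}$. This is precisely the kernel of $A\mapsto e_k(A)$, which by Theorem~\ref{Zkernel} is $G_{k+1}$.

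Intersecting over all $k$ with $\alpha'_k\ne 0$ and using the nested filtration $G_{m+1}\subset\cdots\subset G_0$ from Theorem~\ref{Zkernel} gives $G_{k_{\max}+1}$, covering also the edge case $\alpha=0$ where $k_{\max}=-1$ and the answer is $G_0=\mathbb{Z}^{\binom{n}{m}}$. The main obstacle is Step~2 together with the orthogonality claim in Step~3: one must carefully verify that the inclusion-exclusion expansion of the symmetric $q$-idempotent exactly matches the formula defining $\alpha'_k$, and that the induced motivic decomposition of $A^\bullet(X^n)_{\mathbb{Q}}$ separates different $(k,B)$-components cleanly so that the relation splits. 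Once these are in hand the combinatorial assembly is immediate from Theorem~\ref{Zkernel}.
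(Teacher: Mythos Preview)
Your proposal is correct and follows essentially the same route as the paper: both build the idempotent $\Gamma=\gamma^*\boxtimes\gamma$ (your $p$, up to the source/target convention), decompose $A^\bullet(X^m)_{\mathbb{Q}}^{S_m}$ via the $2^m$ orthogonal idempotents $\Gamma^{\boxtimes\bar S}\boxtimes(\Delta_2-\Gamma)^{\boxtimes S}$, identify the $k$-th symmetric component of $\alpha$ with $\gamma^{\boxtimes(m-k)}\boxtimes\alpha'_k$, observe that $\alpha(A)$ lands in the orthogonal summands of $A^\bullet(X^n)_{\mathbb{Q}}$ indexed by $k$-subsets $B\subset A$, and then invoke Theorem~\ref{Zkernel} (the paper via Corollary~\ref{symcor} applied to $f=\sum e_{k_i}$, you directly via the nesting of the $G_k$). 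The only cosmetic discrepancy is your hedge that $\beta_k$ is ``proportional to $\alpha'_k$'': in fact the inclusion--exclusion yields $\alpha'_k$ on the nose with no extra constant, which the paper states and you would see once you carry out the expansion you flag as Step~2.
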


\begin{cor}
Let $\Delta_m$ be the diagonal in $A^\bullet(X^m)_{\mathbb{Q}}$, and $\pt \in X$. Then the kernel of the map $\mathbb{Z}^{\binom{n}{m}} \to A^\bullet(X^n)_{\mathbb{Q}}$ given by $$A \mapsto \Delta_m(A)=\pt^{\{1,\ldots,n\}\setminus A}\times \Delta_A$$ is $G_\ell$ for $\ell \le m$ the smallest number such that the modified diagonal $$\Delta'_{\ell}:= \sum_{\emptyset \ne B \subset \{1,\ldots,\ell\}} (-1)^{\ell-|B|}\Delta_B(B) \in A^\bullet(X^\ell)_{\mathbb{Q}}$$ vanishes, where $\Delta_B(B)=\pt^{\{1,\ldots,\ell\}\setminus B}\times \Delta_B$. If $\ell=0$ then there are no relations.
\end{cor}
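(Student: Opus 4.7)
The plan is to specialize Theorem \ref{mainthm} to the case $\gamma = [\pt]$ and $\alpha = \Delta_m$. For the hypothesis, we take $\gamma^* = [X] \in A^0(X)_{\mathbb{Q}}$ (the fundamental class); then $\int_X [\pt] \cup [X] = \int_X [\pt] = 1$, so the hypothesis is satisfied. Substituting into the definition $\alpha(A) = \gamma^{\boxtimes \{1,\ldots,n\}\setminus A} \boxtimes \alpha$ immediately recovers $\Delta_m(A) = \pt^{\{1,\ldots,n\}\setminus A}\times \Delta_A$.

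The main computation is identifying $\alpha_B$. Since $\gamma^* = [X]$ is the fundamental class, $(\gamma^*)^{\boxtimes \{1,\ldots,m\}\setminus B}\cup \alpha = \Delta_m$, so $\alpha_B$ is the pushforward of $\Delta_m$ along the projection $X^m \to X^B$. For $B \neq \emptyset$, the small diagonal $\Delta_m$ is the image of $X$ under $\delta \colon X \hookrightarrow X^m$, and the composition $X \xrightarrow{\delta} X^m \to X^B$ is the diagonal embedding $\delta_B$, so the pushforward is $[\Delta_B]$. For $B = \emptyset$, the pushforward $X^m \to \pt$ of the codimension-$(m-1)\dim X$ class $\Delta_m$ vanishes whenever $\dim X \geq 1$ (the case $\dim X = 0$ being handled trivially). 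Consequently, $\alpha_B(B) = \Delta_B(B)$ and the $B = \emptyset$ term is absent, giving
\[
\alpha'_k = \sum_{B\subset\{1,\ldots,k\}}(-1)^{k-|B|}\alpha_B(B) = \sum_{\emptyset \ne B\subset\{1,\ldots,k\}}(-1)^{k-|B|}\Delta_B(B) = \Delta'_k.
\]

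With this identification in hand, Theorem \ref{mainthm} gives the kernel as $G_{k+1}$ where $k$ is the largest integer in $\{-1,0,\ldots,m\}$ with $\Delta'_k \neq 0$ (noting that $\Delta_m \neq 0$ when $X$ is nonempty, so $k \geq 0$). The final step is the cosmetic translation to the corollary's indexing. Using O'Grady's monotonicity result (cited in the introduction), $\Delta'_\ell = 0$ implies $\Delta'_{\ell+s} = 0$ for all $s \geq 0$, so setting $\ell = k+1$ makes $\ell$ the smallest integer in $\{1,\ldots,m\}$ with $\Delta'_\ell = 0$; if no such $\ell$ exists in that range then $k = m$ and the kernel is $G_{m+1} = 0$, recorded by the convention $\ell = 0$ (no relations). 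The main obstacle, such as it is, is merely being careful with the $B = \emptyset$ boundary term and the index translation between ``largest $k$ with $\Delta'_k \neq 0$'' and ``smallest $\ell$ with $\Delta'_\ell = 0$,'' both of which are routine given Theorem \ref{mainthm} and the nesting $G_{m+1} \subset \cdots \subset G_0$ established in Theorem \ref{Zkernel}.
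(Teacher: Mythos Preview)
Your proof is correct and follows exactly the paper's approach: specialize Theorem~\ref{mainthm} with $\gamma=[\pt]$, then invoke O'Grady's monotonicity to pass from ``largest $k$ with $\Delta'_k\ne 0$'' to ``smallest $\ell$ with $\Delta'_\ell=0$.'' The paper's proof is the same two-line argument; you have simply spelled out the verifications (that $\alpha_B=\Delta_B$ and that the $B=\emptyset$ term drops) which the paper leaves implicit.
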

\begin{proof}
Take $\gamma=[\pt]$. By \cite{Ogrady}, if $\Delta'_k$ vanishes then $\Delta'_{k+s}$ vanishes for all $s \ge 0$, so $\ell=k+1$ where $k$ is the largest number such that $\Delta'_k$ does not vanish.
\end{proof}

\section{Proof of \Cref{Zkernel}}
In this section, we prove our main combinatorial result \Cref{Zkernel}.

\begin{proof}[Proof of \Cref{Zkernel}]

First, we will show that there are no linear relations between $e_k(A)$ for $A$ ranging over $m$ element subsets of $\{1,\ldots,m+k\}$, and then we will show that using $(k+1)$-hyperoctahedral $m$-relations we can reduce every relation down to one where $A$ ranges over fixed $m$-element subsets of $\{1,\ldots,m+k\}$.

To show the linear independence, note there are $\binom {m+k}{m}$ polynomials of the form $e_k(A)$ with $A$ of size $m$ inside $\{1,\ldots,m+k\}$, and this is equal to $\binom{m+k}{k}$, the number of $k$-products of distinct monomials $x_i$ with $i \in \{1,\ldots,m+k\}$. Hence, to show the linear independence it suffices to show that we can write each of these monomials as a $\mathbb{Q}$-linear combination of the $e_k(A)$'s with $A$ of size $m$ in $\{1,\ldots,m+k\}$.

We will do this by inductively showing that all monomials of the form $x_{i_1}\ldots x_{i_\ell} e_{k-\ell}(B)$ lie in the $\mathbb{Q}$-linear span of the $e_k(A)$ with $i_1,\ldots,i_\ell \in \{1,\ldots,m+k\}$ distinct and $B\subset \{1,\ldots,m+k\}\setminus \{i_1,\ldots,i_\ell\}$ a subset of size $m$. Indeed, this is true for $\ell=0$. Suppose the result is true for $\ell-1$, we will show it is true for $\ell$. Indeed,

\begin{align*}(m-(k-\ell))x_{i_1}\ldots x_{i_\ell} e_{k-\ell}(B)=&x_1\ldots x_{i_{\ell-1}}\sum_{b \in B} e_{k-\ell+1}(\{i_\ell\}\sqcup B\setminus \{b\})-\\
&x_1\ldots x_{i_{\ell-1}}(m-(k-\ell+1))e_{k-\ell+1}(B).
\end{align*}
Taking $\ell=k$ now shows that each such monomial is a $\mathbb{Q}$-linear combination of the $e_k(A)$'s as desired.

Now, using the $(k+1)$-hyperoctahedral $m$-relations, we will show that every relation can be reduced down to one where the $A$ range over $m$-element subsets of $\{1,\ldots,m+k\}$. We proceed by induction on $k$. For $k=0$ the result is trivial, so now assume that $k>0$. We are done if $n \le m+k$, so assume that $n>m+k$. Then by assumption we have a relation $$0=\sum \lambda_A e_k(A)=x_n\sum_{n \in A} \lambda_A e_{k-1}(A\setminus n)+\sum_{n \in A} \lambda_A e_{k}(A\setminus n)+\sum_{n \not \in A} \lambda_A e_k(A).$$ By the induction hypothesis, we know that $\sum_{n \in A} \lambda_A e_{k-1}(A\setminus n)$ is the sum of $k$-hyperoctahedral $(m-1)$-relations, so for each $i$ there exists an $m$-element set $B^i\subset \{1,\ldots,n-1\}$ and disjoint pairs $C^i_j=\{c^i_{j,0},c^i_{j,1}\} \subset \{1,\ldots,n-1\}\setminus B^i$ for $1 \le j \le k$ such that $$\sum_{n \in A} \lambda_A (A\setminus n)=\sum_i \sum_{(\epsilon_1,\ldots,\epsilon_k)\in \{0,1\}^k}(-1)^{\sum \epsilon_i} B^i \bigsqcup \{c^i_{1,\epsilon_1},\ldots,c^i_{k,\epsilon_k}\}.$$
As $n-1\ge m+k$ and only $m+k-1$ elements are used in the $i$'th hyperoctahedral sum, there exists an element $r_i\in \{1,\ldots,n-1\}$ not used in the $i$'th sum. Then letting $c^i_{k+1,0}=n$ and $c^i_{k+1,1}=r_i$, we have
\begin{align*}
\sum_{n \in A} \lambda_A A=&\sum_i \sum_{(\epsilon_1,\ldots,\epsilon_{k+1})\in \{0,1\}^{k+1}}(-1)^{\sum \epsilon_i} B^i \bigsqcup \{c^i_{1,\epsilon_1},\ldots,c^i_{k+1,\epsilon_{k+1}}\}\\
&+\sum_i \sum_{(\epsilon_1,\ldots,\epsilon_k)\in \{0,1\}^k}(-1)^{\sum \epsilon_i} \{r_i\}\cup B^i \bigsqcup \{c^i_{1,\epsilon_1},\ldots,c^i_{k,\epsilon_k}\},
\end{align*}
where the first term on the right hand side is a sum of $(k+1)$-hyperoctahedral $m$-sums, and the second term does not involve $n$. Hence using $(k+1)$-hyperoctahedral $m$-relations we can reduce to a situation where $A \subset \{1,\ldots,n-1\}$. Repeating this we eventually reduce down to $A \subset \{1,\ldots,m+k\}$, and the result follows.

Finally, the nesting of the $G_i$ follows as given a relation between $e_k(A)$'s, applying the operator $\frac{1}{m-k+1}\sum_{i=1}^n \frac{\partial}{\partial x_i}$ yields the identical relation between $e_{k-1}(A)$'s.
\end{proof}

\section{Proof of \Cref{mainthm}}
\label{combred}
In this section we prove \Cref{mainthm}, our main geometric result. We first discuss some generalities on Chow motives, and then proceed with the proof.

In general $A^\bullet$ does not satisfy a K\"unneth formula, so we only have a possibly non-injective and non-surjective map $A^\bullet(X)^{\otimes m}_{\mathbb{Q}} \to A^\bullet(X^m)_{\mathbb{Q}}$. By using Chow motives we will see that certain correspondences provide enough of a substitute for the K\"unneth formula for our purposes.

We say that a correspondence on $X$ is an element of $A^\bullet(X^2)_{\mathbb{Q}}$. Let $\Gamma$ be a correspondence on $X$, which induces an endomorphism of $A^\bullet(X)_{\mathbb{Q}}$ via $$\Gamma: \alpha \mapsto (\pi_2)_*((\pi_1^*\alpha)\cap \Gamma)$$ where $\pi_i$ is the projection $X^2 \to X$ onto the $i$'th factor. For any space $S$, $\Gamma$ similarly induces an endomorphism of $A^\bullet(S \times X)_{\mathbb{Q}}$. There is a notion of composition of correspondences, which for $\Gamma_1,\Gamma_2 \in A^\bullet(X^2)_{\mathbb{Q}}$ is defined by $$\Gamma_2 \circ \Gamma_1=(\pi_{13})_*(\pi_{12}^*\Gamma_1\cup \pi_{23}^*\Gamma_2)$$ where $\pi_{ij}$ is the projection $X^3 \to X^2$ onto the $i,j$ factors.
On the level of functions, $\Gamma_2 \circ \Gamma_1$ induces the composite endomorphism of $A^\bullet(S \times X)_{\mathbb{Q}}$.
Suppose that $\Gamma$ is \emph{idempotent}, which means that $$\Gamma = \Gamma \circ \Gamma.$$ In particular, $\Gamma \in A^{\dim(X)}(X^2)_{\mathbb{Q}}$ so the associated function doesn't shift the grading. We remark that an effective Chow motive is a pair of the form $(X,\Gamma)$ with $\Gamma$ an idempotent correspondence on $X$. The identity for composition is the idempotent $\Gamma=\Delta_2$, whose associated endomorphism on $A^\bullet(S \times X)_{\mathbb{Q}}$ is the identity.

As $\Gamma$ is idempotent and $\Delta_2$ corresponds to the identity, $\Gamma, \Delta_2-\Gamma$ are orthogonal idempotents, so the images of their associated functions $A^\bullet(X)_{\mathbb{Q}} \to A^\bullet(X)_{\mathbb{Q}}$ direct sum to $A^\bullet(X)_{\mathbb{Q}}$. More generally, for $k\in \{m,n\}$, the $2^k$ elements $\Gamma^{\boxtimes \{1,\ldots,k\}\setminus B}\boxtimes (\Delta_2-\Gamma)^{\boxtimes B}$ form an orthogonal system of idempotent correspondences of $X^k$ in $A^\bullet((X^2)^k)_{\mathbb{Q}}=A^\bullet((X^k)^2)_{\mathbb{Q}}$ summing to the identity correspondence $\Delta_2^{\boxtimes \{1,\ldots,k\}}$, so
\begin{align*}
A^\bullet(X^m)_{\mathbb{Q}}=&\bigoplus_{B \subset \{1,\ldots,m\}} Im(\Gamma^{\boxtimes \{1,\ldots,m\}\setminus B}\boxtimes (\Delta_2-\Gamma)^{\boxtimes B})\text{, and}\\
A^\bullet(X^n)_{\mathbb{Q}}=&\bigoplus_{B \subset \{1,\ldots,n\}} Im(\Gamma^{\boxtimes \{1,\ldots,n\}\setminus B}\boxtimes (\Delta_2-\Gamma)^{\boxtimes B}).
\end{align*}
We remark that on any product space $X_1 \times X_2$ with correspondences $\Gamma_i \in A^\bullet((X_i)^2)_{\mathbb{Q}}$ we have $\Gamma_1 \boxtimes \Gamma_2=(\Gamma_1 \boxtimes (\Delta_2)_{X_2}) \circ ((\Delta_2)_{X_1} \boxtimes \Gamma_2)$. Hence for $k \in \{m,n\}$, each of the above correspondences is the composition of $k$ commuting correspondences on $X^k$, with the $i$'th correspondence inducing the endomorphism of $A^\bullet(X^{i-1}\times X \times X^{k-i})_{\mathbb{Q}}$ from the associated correspondence on $X$.

It is clear that $\alpha \in A^\bullet(X^m)_{\mathbb{Q}}^{S_m}$ if and only if the component of $\alpha$ in $Im(\Gamma^{\boxtimes \{1,\ldots,m\}\setminus B}\boxtimes (\Delta_2-\Gamma)^{\boxtimes B})$ is $S_{\{1,\ldots,m\}\setminus B} \times S_{B}$-invariant and depends only on $|B|$, and hence
$$A^\bullet(X^m)_{\mathbb{Q}}^{S_m}\cong \bigoplus_{k=0}^m Im(\Gamma^{\boxtimes m-k}\boxtimes (\Delta_2-\Gamma)^{\boxtimes k})^{S_{m-k} \times S_k}.$$
Note that $Im$ does not distribute over $\boxtimes$ in general because there may be classes in $A^\bullet(X^n)_{\mathbb{Q}}$ which are not in the image of $A^\bullet(X)_{\mathbb{Q}}^{\otimes m} \to A^\bullet(X^m)_{\mathbb{Q}}$.

\begin{proof}[Proof of \Cref{mainthm}]
Let $\Gamma= \gamma^*\boxtimes \gamma \in A^\bullet(X^2)_{\mathbb{Q}}$ with $\gamma^* \in A^\bullet(X)$. The idempotency of $\Gamma$ is equivalent to $\int \gamma \cup \gamma^* =1 \in A^\bullet(\pt)_{\mathbb{Q}}$, and the map $\Gamma$ takes $\alpha \mapsto \gamma \cup \int (\alpha \cup \gamma^*)$. More generally, in any space $X \times Y$, $\Gamma$ induces the endomorphism of $A^\bullet(X\times Y)_{\mathbb{Q}}$ $$\Gamma:\alpha \mapsto \gamma \boxtimes \int_{X \times Y \to Y} \alpha \cup (\gamma^* \boxtimes 1).$$

It directly follows that for this choice of $\Gamma$ we have
\begin{align*}
Im(\Gamma^{\boxtimes m-k}\boxtimes (\Delta_2-\Gamma)^{\boxtimes k})^{S_{m-k}\times S_{k}}&=\gamma^{\boxtimes m-k} \boxtimes Im((\Delta_2-\Gamma)^{\boxtimes k})^{S_{k}}\\
Im(\Gamma^{\boxtimes \{1,\ldots,n\}\setminus B}\boxtimes (\Delta_2-\Gamma)^{\boxtimes B})&=\gamma^{\boxtimes \{1,\ldots,n\}\setminus B} \boxtimes Im((\Delta_2-\Gamma)^{\boxtimes B}).
\end{align*}
Under the map $\alpha \mapsto \alpha(A)$, the $k$'th summand $\gamma^{\boxtimes m-k}\boxtimes Im((\Delta_2-\Gamma)^{\boxtimes k})^{S_{m-k} \times S_k}$ of $A^\bullet(X^m)^{S_m}$ maps to $$\bigoplus_{|B|=k} \gamma^{\boxtimes \{1,\ldots,n\}\setminus B} \boxtimes Im((\Delta_2-\Gamma)^{\boxtimes B})\subset A^\bullet(X^n)_{\mathbb{Q}},$$
which are disjoint summands for distinct $k$'s. This map acts on an element $\gamma^{\boxtimes k}\boxtimes \beta \in \gamma^{\boxtimes m-k}\boxtimes Im((\Delta_2-\Gamma)^{\boxtimes k})^{S_{m-k} \times S_k}$ in the $k$'th summand of $A^\bullet(X^m)^{S_m}$ via
$$\gamma^{\boxtimes m-k} \boxtimes \beta \mapsto \sum_{|B|=k, B \subset A}\gamma^{\boxtimes \{1,\ldots,n\}\setminus B} \boxtimes \beta.$$
Note that there is exactly one term in each summand $\gamma^{\boxtimes \{1,\ldots,n\}\setminus B} \boxtimes Im((\Delta-\Gamma)^{\boxtimes B}) \subset A^\bullet(X^n)_{\mathbb{Q}}$ with $|B|=k$ and $B\subset A$, and the term depends only on $\beta$ and not on $A$.

We may check directly that
$$\left(\Gamma^{\boxtimes\{1,\ldots,m\}\setminus B}\boxtimes(\Delta(2)-\Gamma)^{\boxtimes B}\right)(\alpha)=\gamma^{\{1,\ldots,m\}\setminus B}\boxtimes\alpha'_{|B|},$$
so the component of $\alpha$ in the $k$'th summand $Im(\Gamma^{\boxtimes m-k}\boxtimes (\Delta_2-\Gamma)^{\boxtimes k})^{S_{m-k} \times S_k}$ is $\gamma^{m-k}\boxtimes \alpha'_{k}$.

Hence, if we set $$f=e_{k_1}+\ldots+e_{k_r} \in \mathbb{Z}[x_1,\ldots,x_m]^{S_m}$$ where the $k_i$ are those $k$ such that $\gamma^{\{1,\ldots,n\}\setminus B}\boxtimes \alpha'_k \ne 0$ for any $|B|=k$ (note that by symmetry either the classes for all such $B$ vanish or none vanish), then the $\mathbb{Z}$-linear relations between the $f(A)$ polynomials are identical to the $\mathbb{Z}$-linear relations between the $\alpha(A)$ for $A$ ranging over $m$-element subsets of $\{1,\ldots,n\}$.

Finally,
$$Im((\Delta-\Gamma)^{\boxtimes B})\cong \gamma^{\boxtimes \{1,\ldots,n\}\setminus B} \boxtimes Im((\Delta-\Gamma)^{\boxtimes B})$$
because the map $\beta \mapsto \gamma^{\boxtimes \{1,\ldots,n\}\setminus B}\boxtimes \beta$
has an inverse map given by $\delta \mapsto \int_{X^n \to X^B}\delta \cup (\gamma^*)^{\boxtimes \{1,\ldots,n\}\setminus B},$
so we have $$\gamma^{\boxtimes \{1,\ldots,n\}\setminus B}\boxtimes \alpha'_{|B|}=0 \iff \alpha'_{|B|}=0.$$

The result now follows by applying \Cref{symcor} to $f$.

\end{proof}
\begin{rmk}
We remark that there is a notion of homomorphism and tensor product for Chow motives, and if $\gamma \in A^k(X)_{\mathbb{Q}}$, then $(X,\gamma^* \boxtimes \gamma)$ is isomorphic to the motive $\mathbb{L}^{\otimes k}$ where $\mathbb{L}$ is the \emph{Lefschetz motive} $(\mathbb{P}^1,\mathbb{P}^1\times \{\pt\})$, see \cite{Kimura}.
\end{rmk}
\begin{ack*}
I would like to thank Dennis Tseng for suggesting to study relations between diagonals and for many fruitful conversations.
\end{ack*}

\bibliography{refs}
\bibliographystyle{alpha}

\end{document}